\newtheoremstyle{modified}
 {}
 {}
 {}
 {}
 {\bfseries}
 {}
 {.5em}
 {#1 #2.\thmnote{#3}}
\theoremstyle{modified}
\newtheorem{theorem}{Theorem}[section]
\newtheorem{lemma}[theorem]{Lemma}
\newcommand{\pr}{{\operatorname{pr}}}
\title{$S_n$-Equivariant Sheaves and Koszul Cohomology}
\author{David Yang}
\begin{document}
\maketitle
\begin{abstract}
We give a new interpretation of Koszul cohomology, which is equivalent under the Bridgeland-King-Reid equivalence to Voisin's Hilbert scheme interpretation in dimensions 1 and 2, but is different in higher dimensions. As an application, we prove that the dimension $K_{p,q}(B,L)$ is a polynomial in $d$ for $L=dA+P$ with $A$ ample and $d$ large enough.
\end{abstract}
\section{Introduction}
The Koszul cohomology of a line bundle $L$ on an algebraic variety $X$ was introduced by Green in \cite{Green}. Koszul cohomology is very closely related to the syzygies of the embedding defined by $L$ (if $L$ is very ample) and is thus related to a host of classical questions. We assume that the reader is already aware of these relations and the main theorems on Koszul cohomology; for a detailed discussion, the reader is referred to \cite{Green}.

Recently, in \cite{EL}, it was realized that a conjectural uniform asymptotic picture emerges as $L$ becomes more positive. We give a new interpretation of Koszul cohomology which we believe will clarify this picture. As an application, we prove that $K_{p,q}(B,dA+P)$ grows polynomially in $d$. In particular, this gives a partial answer to Problem 7.2 from \cite{EL}. More precisely, we establish

\begin{theorem}
Let $A$ and $P$ be ample line bundles on a smooth projective variety $X$. Let $B$ be a locally free sheaf on $X$. Then there is a polynomial that equals $\operatorname{dim} K_{p,q}(B,dA+P)$ for $d$ sufficiently large.
\end{theorem}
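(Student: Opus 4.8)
The plan is to leverage the $S_n$-equivariant interpretation of Koszul cohomology developed above, which identifies $\dim K_{p,q}(B,dA+P)$ with the dimension of a fixed isotypic piece of the cohomology of an $S_n$-equivariant coherent sheaf on the fixed product $X^n$. Concretely, for an integer $n$, an irreducible $S_n$-representation $V_\lambda$, and a cohomological degree $i_0$ --- all depending only on $p$ and $q$ and \emph{not} on $d$ --- the interpretation writes the group in question as the $V_\lambda$-multiplicity space of $H^{i_0}(X^n,\mathcal E_d)$, where $\mathcal E_d=\mathcal E_0\otimes\mathcal L_d$. The crucial structural observation, which I would record first, is that $\mathcal E_0$ is a fixed $S_n$-equivariant sheaf assembled from $B$, $P$, and the combinatorial ideal-sheaf data of the construction, while the \emph{entire} dependence on $d$ is concentrated in the $S_n$-linearized line bundle $\mathcal L_d=(A^{\boxtimes n})^{\otimes d}\otimes\mathcal L_0$, with $A^{\boxtimes n}$ ample on $X^n$ and $\mathcal L_0$ fixed. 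The problem thus reduces to the asymptotics of $H^\bullet(X^n,\mathcal E_0\otimes\mathcal L_d)$, together with its $S_n$-module structure, as $d\to\infty$.

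Next I would descend to the symmetric product. Writing $\pi\colon X^n\to Y:=X^n/S_n$ for the quotient and working in characteristic zero, the invariant pushforward $\mathcal F\mapsto(\pi_*\mathcal F)^{S_n}$ is exact, so extracting the $V_\lambda$-multiplicity space commutes with cohomology: the $V_\lambda$-multiplicity space of $H^{i_0}(X^n,\mathcal E_d)$ is identified with $H^{i_0}\bigl(Y,\pi^{S_n}_*(\mathcal E_d\otimes V_\lambda^\vee)\bigr)$. Because $\mathcal L_d$ is $S_n$-equivariant and symmetric it descends: $A^{\boxtimes n}=\pi^*\mathcal N$ for a line bundle $\mathcal N$ on $Y$ that is ample (its pullback along the finite surjection $\pi$ is ample), and the projection formula turns the twist by $(A^{\boxtimes n})^{\otimes d}$ into a twist by $\mathcal N^{\otimes d}$ on $Y$. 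Hence the dimension we want equals $\dim H^{i_0}(Y,\mathcal G_\lambda\otimes\mathcal N^{\otimes d})$ for the single fixed coherent sheaf $\mathcal G_\lambda:=\pi^{S_n}_*(\mathcal E_0\otimes\mathcal L_0\otimes V_\lambda^\vee)$ on the fixed projective scheme $Y$.

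The argument then closes with Serre vanishing and a Hilbert-polynomial count. Since $\mathcal N$ is ample on $Y$ and $\mathcal G_\lambda$ is fixed, there is a threshold $d_0$ --- depending only on $p,q,B,A,P$ --- beyond which $H^i(Y,\mathcal G_\lambda\otimes\mathcal N^{\otimes d})=0$ for all $i>0$. For $d\ge d_0$ the cohomology is therefore concentrated in degree $0$: if $i_0>0$ the multiplicity vanishes identically (so $\dim K_{p,q}=0$ is the zero polynomial), while if $i_0=0$ it equals $\chi(Y,\mathcal G_\lambda\otimes\mathcal N^{\otimes d})$, which is the Hilbert polynomial of $\mathcal G_\lambda$ with respect to $\mathcal N$ and hence a genuine polynomial in $d$. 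Multiplying by the fixed integer $\dim V_\lambda$ recovers $\dim K_{p,q}(B,dA+P)$, proving agreement with a polynomial for all $d\ge d_0$.

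The step I expect to be the main obstacle is securing the two properties of the interpretation on which everything hinges: that the index $n$ (and with it $X^n$ and $\mathcal E_0$) is genuinely independent of $d$, and that the $d$-dependence enters only through tensoring with the ample external power $(A^{\boxtimes n})^{\otimes d}$. Granting these --- which is precisely what the equivariant reformulation is designed to provide --- the remaining analytic content is routine: exactness of invariant pushforward in characteristic zero, descent and ampleness of $\mathcal N$ on the \emph{singular} symmetric product $Y$, and Serre vanishing there. A secondary point needing care is that $Y$ is singular, so I would invoke Serre vanishing and the Hilbert polynomial for coherent sheaves on an arbitrary projective scheme rather than the smooth Riemann--Roch formula; alternatively one may stay upstairs on the smooth $X^n$ and extract the same polynomial from the $S_n$-equivariant Euler characteristic via the holomorphic Lefschetz fixed-point formula, whose fixed-locus contributions are manifestly polynomial in $d$.
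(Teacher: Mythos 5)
Your overall mechanism is exactly the paper's: concentrate the $d$-dependence in the trivially linearized external power $dD_A=(A^{\boxtimes n})^{\otimes d}$, descend along $\pi\colon X^n\to\operatorname{Sym}^n X$ where $D_A=\pi^*A'$ with $A'$ ample, and finish with Serre vanishing plus the Hilbert polynomial; your handling of $q\geq 2$ (vanishing for $d\gg 0$) and of $q=0$ is sound. The genuine gap is your opening claim that the equivariant interpretation always presents $K_{p,q}$ as a single isotypic piece of a single group $H^{i_0}(X^n,\mathcal E_0\otimes\mathcal L_d)$ with $n$, $i_0$, $\mathcal E_0$ independent of $d$. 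Theorem~\ref{interpretation} does \emph{not} provide this for $q\leq 1$: there it provides only the four-term exact sequence, in which $K_{p,1}$ is the \emph{cokernel} of $H^0(B)\otimes H^0_{S_{p+1}}(L_d^{[p+1]})\rightarrow H^0_{S_{p+1}}(p^*B\otimes q^*L_d^{[p+1]})$, not a cohomology group. The single-group form $K_{p,q}\cong H^q_{S_{p+q}}((B\boxtimes L_d^{[p+q]})\otimes \mathcal I_Z)$ requires the extra hypothesis that $B$ has no higher cohomology, and even granting it, that sheaf lives on $X\times X^{p+q}$, where the $d$-dependent twist is $\pr_2^*(dD_A)$: this is not ample (it is trivial along the first factor), and $\pr_2$ is not finite on the support of the ideal-sheaf twist, so you can neither invoke Serre vanishing upstairs nor push down to $X^{p+q}$ without disturbing $H^1$ (higher direct images intervene). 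This obstruction is not merely technical: if your scheme did apply to $q=1$, it would assign it a positive degree $i_0>0$, and your own Serre-vanishing step would then conclude $K_{p,1}(B,L_d)=0$ for $d\gg 0$ --- contradicting the Ein--Lazarsfeld nonvanishing theorem quoted in Section 2, by which $K_{p,1}(B,L_d)\neq 0$ for every fixed $p$ above a constant once $d$ is large. (Assigning $i_0=0$ instead, by extrapolating the $q>1$ formula, computes the wrong group: the third term of the exact sequence rather than its cokernel.)

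The repair is the bookkeeping the paper performs for $q\in\{0,1\}$: apply your descent-plus-Hilbert-polynomial argument to each non-Koszul term of the exact sequence separately, and take the alternating sum of dimensions. Concretely, $K_{p+1,0}$ is a kernel of a map of $H^0$'s, hence equals $H^0_{S_{p+1}}((B\boxtimes L_d^{[p+1]})\otimes\mathcal I_Z)$; because only $H^0$ is involved, pushing forward along $\pr_2$ is harmless (left exactness and the projection formula), after which the sheaf sits on $X^{p+1}$ twisted by $dD_A$ and your descent to $\operatorname{Sym}^{p+1}X$ applies verbatim. The middle term has dimension $h^0(B)\binom{h^0(L_d)}{p+1}$, a polynomial in $d$ for $d$ large since $h^0(L_d)$ is. The term $H^0_{S_{p+1}}(p^*B\otimes q^*L_d^{[p+1]})$ is supported on $Z$, which \emph{is} finite over $X^{p+1}$, so it is handled exactly as in your $q\geq 2$ case but in degree zero, yielding a Hilbert polynomial rather than vanishing. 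Then $\dim K_{p,1}$ is an alternating sum of three polynomials, hence itself a polynomial for $d$ sufficiently large.
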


The inspiration for our new interpretation came from Voisin's papers \cite{Voisin1}, \cite{Voisin2} on generic Green's conjecture. To prove generic Green's conjecture, Voisin writes Koszul cohomology for $X$ a surface in terms of the sheaf cohomology of various sheaves on a Hilbert scheme of points of $X$. While this allows for the use of the geometry of Hilbert schemes in analyzing Koszul cohomology, this interpretation has multiple downsides.

The main downside is that it does not generalize well to higher dimensions, as the Hilbert scheme of points of $X$ is not necessarily smooth (or even irreducible) unless $X$ has dimension at most 2. It is unclear to us if this difficulty can be circumvented through uniform use of the curvillinear Hilbert scheme, but even if it can, the loss of properness creates various technical difficulties.

Our construction can be thought of as replacing the Hilbert scheme with a noncommutative resolution of singularities (though the word noncommutative need not ever be mentioned). Specifically, we replace the cohomology of sheaves on Hilbert schemes with the $S_n$-invariants of the cohomology of $S_n$-equivariant sheaves on $n$th fold products. This is motivated by the Bridgeland-King-Reid equivalence \cite{BKR}, which implies that for a curve or surface $X$, the derived category of coherent sheaves on the Hilbert scheme is equivalent to the derived category of $S_n$-equivariant coherent sheaves on $X^n.$ We note that this had been previously used to compute the cohomology of sheaves on the Hilbert scheme, for instance in \cite{Scala}.

We would also like to note two other results related to ours. If one chooses to work with ordinary sheaves, as opposed to $S_n$-equivariant sheaves, one recovers in effect a theorem implicit in \cite{Green2}, which was first stated explicitly in \cite{Inamdar}. We refer to the discussion after Theorem \ref{interpretation} for more detail. Finally, we note that \cite{gonality} proves a result which implies \ref{polynomialicity} for curves. The proof methods are very similar, and we are hopeful that Theorem \ref{interpretation} will help in extending their results to higher dimensions.

Section 2 of this paper is a short introduction to Koszul cohomology. The main goal is to describe the conjectural asymptotic story of \cite{EL}. We also describe (but do not prove) Voisin's interpretation of Koszul cohomology in terms of Hilbert schemes.

Section 3 contains our new interpretation. Section 4 then uses it to analyze the asymptotics of $K_{p,q}(dL+B)$. All the proofs in both sections are quite short.

This research was supported by an NSF funded REU at Emory University under the mentorship of David Zureick-Brown. We would also like to thank Ken Ono and Evan O'Dorney for advice and helpful conversations. Finally, we would like to thank Robert Lazarsfeld for his encouragement and correspondence. Both he and the referees gave many useful suggestions.

\section{Koszul Cohomology}
Our discussion of Koszul cohomology will be quite terse. For details and motivation, see \cite{Green} and \cite{EL}.

Let $X$ be a smooth projective algebraic variety of dimension $n$ over an algebraically closed field $k$ of characteristic zero and let $L$ be a line bundle on $X$. Form the graded ring $S=\operatorname{Sym}^{\bullet}H^0(L)$. For any coherent sheaf $B$ on $X$, we have a natural graded $S$-module structure on $M=\oplus_{n\geq 0}H^0(B+nL).$ From this we can construct the bigraded vector space $\operatorname{Tor}^{\bullet\bullet}_S(M,k).$ The $(p,q)$th Kozsul cohomology of $(B,L)$ is the $(p,p+q)$-bigraded part of this vector space. We will call its dimension the $(p,q)$ Betti number, and the two-dimensional table of these the Betti table.

In this paper, we will always take $B$ to be locally free.

The first theorem we describe is a duality theorem for Kozsul cohomology, proven in Green's original paper \cite{Green}. (In fact, Green proves a slightly stronger result.)

\begin{theorem}
\label{duality}
Assume that $L$ is base-point-free, and $H^i(B\otimes(q-i)L)=H^i(B\otimes(q-i-1)L)=0$ for $i=1,2,\ldots, n-1.$ Then we have a natural isomorphism between $K_{p,q}(B,L)^*$ and $K_{h^0(L)-n-p,n+1-q}(B^*\otimes K_X,L)$.
\end{theorem}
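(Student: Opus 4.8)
The plan is to reduce the statement to Serre duality on $X$ by reinterpreting each Koszul cohomology group as a single sheaf-cohomology group of a twist of an exterior power of the kernel bundle $M_L$. Since $L$ is base-point-free, the evaluation map $H^0(L)\otimes\mathcal{O}_X\to L$ is surjective; let $M_L$ denote its kernel, a vector bundle of rank $h^0(L)-1$ with $\det M_L\cong L^{-1}$. The short exact sequence $0\to M_L\to H^0(L)\otimes\mathcal{O}_X\to L\to 0$ induces, for every $j$, the exact sequences $0\to\wedge^j M_L\to\wedge^j H^0(L)\otimes\mathcal{O}_X\to\wedge^{j-1}M_L\otimes L\to 0$, obtained by splitting the (acyclic, because the tautological section is nowhere vanishing) twisted Koszul complex of the evaluation map into its syzygy bundles. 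Twisting these by $B\otimes L^{q-j}$ and taking cohomology produces the connecting maps that will carry all the information.

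First I would establish the key identification: under the stated hypotheses, for a suitable cohomological index $a$ one has a natural isomorphism $K_{p,q}(B,L)\cong H^a(X,\wedge^{p+a}M_L\otimes B\otimes L^{q-a})$. The case $a=1$ follows directly, since writing the Koszul differential as the composite of the maps coming from the syzygy sequences shows that $K_{p,q}(B,L)$ is the cokernel of $\wedge^{p+1}H^0(L)\otimes H^0(B\otimes L^{q-1})\to H^0(\wedge^p M_L\otimes B\otimes L^q)$, which the long exact sequence identifies with $H^1(\wedge^{p+1}M_L\otimes B\otimes L^{q-1})$ once $H^1(B\otimes L^{q-1})=0$. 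One then climbs: each further syzygy sequence yields an isomorphism $H^{i-1}(\wedge^{p+i-1}M_L\otimes B\otimes L^{q-i+1})\cong H^i(\wedge^{p+i}M_L\otimes B\otimes L^{q-i})$ provided $H^{i-1}(B\otimes L^{q-i})=H^i(B\otimes L^{q-i})=0$, the intermediate cohomology of the trivial-bundle term being exactly what must vanish.

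Next I would apply Serre duality to $H^a(X,\wedge^{p+a}M_L\otimes B\otimes L^{q-a})$, using the perfect pairing $\wedge^{p+a}M_L\otimes\wedge^{(h^0(L)-1)-(p+a)}M_L\to\det M_L\cong L^{-1}$ to rewrite $(\wedge^{p+a}M_L)^*\cong\wedge^{h^0(L)-1-p-a}M_L\otimes L$. This turns $K_{p,q}(B,L)^*$ into a group of the form $H^{n-a}(X,\wedge^{h^0(L)-1-p-a}M_L\otimes B^*\otimes K_X\otimes L^{a+1-q})$. Running the identification of the previous paragraph in reverse for the sheaf $B^*\otimes K_X$, with climbing index $b=n-a$, recognizes this as a Koszul cohomology group of $(B^*\otimes K_X,L)$; the bidegree comes out to $K_{h^0(L)-n-p,\,n+1-q}(B^*\otimes K_X,L)$ after collecting indices. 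Naturality is inherited from the naturality of all the connecting maps and of Serre duality.

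The main obstacle is the bookkeeping in the last two steps rather than any single hard idea. One must check that the two families of vanishing hypotheses $H^i(B\otimes L^{q-i})=H^i(B\otimes L^{q-i-1})=0$ for $1\leq i\leq n-1$ are exactly what is consumed: the conditions with twist $q-i$ feed the climb on the $B$-side up to degree $a$, while Serre duality converts the remaining ones into precisely the hypotheses needed to run the dual climb on $B^*\otimes K_X$ up to degree $b=n-a$, and the two requirements partition the stated list independently of the chosen split $a+b=n$. Verifying that the degree reversal of Serre duality matches the index shifts of the two climbs, so that everything lands in the asserted bidegree, is the delicate point; it is also where one sees why precisely these vanishing conditions, and no others, appear in the statement.
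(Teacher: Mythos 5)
Your strategy is the standard proof of this theorem, and it is essentially the only comparison available: the paper does not prove this statement at all but quotes it from Green's original paper, and the modern form of Green's argument (as in Aprodu--Nagel's book) is exactly what you describe --- identify $K_{p,q}(B,L)$ with $H^a(\wedge^{p+a}M_L\otimes B\otimes L^{q-a})$ by climbing through the twisted syzygy sequences of the evaluation map, apply Serre duality together with the pairing $\wedge^{k}M_L\otimes\wedge^{r-k}M_L\to\det M_L\cong L^{-1}$ (where $r=h^0(L)-1=\operatorname{rank}M_L$), and climb back down on the $B^*\otimes K_X$ side. Your accounting of the hypotheses is also correct: the climb to index $a$ consumes $H^i(B\otimes(q-i)L)=0$ for $i\le a$ and $H^i(B\otimes(q-i-1)L)=0$ for $i\le a-1$, and Serre duality converts the remaining conditions into precisely what the dual climb to index $b=n-a$ requires. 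One small caveat: this meet-in-the-middle scheme needs $1\le a\le n-1$, so for curves ($n=1$, where the hypotheses are vacuous) you must instead dualize the four-term kernel/cokernel sequences directly, applying Serre duality to the connecting map rather than to a single cohomology group; this is routine but is a genuinely separate case.

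The one step you defer --- ``verifying that everything lands in the asserted bidegree'' --- is exactly where the argument does not land where you claim. Carrying out your own bookkeeping: Serre duality gives $K_{p,q}(B,L)^*\cong H^{n-a}\bigl(\wedge^{h^0(L)-1-p-a}M_L\otimes B^*\otimes K_X\otimes L^{a+1-q}\bigr)$, and matching this with the dual climb forces $p'+(n-a)=h^0(L)-1-p-a$, i.e.
$$p'=h^0(L)-1-n-p,$$
not $h^0(L)-n-p$. In other words, your method proves Green's theorem in its original indexing, $K_{p,q}(B,L)^*\cong K_{r_L-n-p,\,n+1-q}(B^*\otimes K_X,L)$ with $r_L=h^0(L)-1$; the statement as reproduced in this paper appears to carry an off-by-one error, and your claim that the indices ``come out to'' the displayed bidegree is false rather than delicate-but-true. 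A concrete check: for a non-hyperelliptic canonical curve of genus $4$ (so $n=1$, $h^0(K_C)=4$, hypotheses vacuous), the displayed formula would give $K_{1,1}(\mathcal{O},K_C)^*\cong K_{2,1}(K_C,K_C)\cong K_{2,2}(\mathcal{O},K_C)=0$, yet $K_{1,1}(\mathcal{O},K_C)$ is one-dimensional (the quadric through the canonical curve); the corrected index gives $K_{1,1}(K_C,K_C)\cong K_{1,2}(\mathcal{O},K_C)$, which is indeed one-dimensional (the cubic). So you should not force your computation to match the stated bidegree: carry the computation out honestly, and flag the discrepancy with the statement instead.
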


To compute the Kozsul cohomology, we can use either a free resolution of $M$ or a free resolution of $k$. Taking the minimal free resolution of $M$ relates the Kozsul cohomology to the degrees in which the syzygies of $M$ lie. In particular, this description shows that $K_{p,q}$ is trivial for $q < 0.$ Combining this with Theorem \ref{duality} and some elementary facts on Castelnuovo-Mumford regularity, it is possible to use this to show that $K_{p,q}$ is trivial for $q > n+1$ and is nontrivial for $q=n+1$ only when $p$ is within a constant (depending only on $B$ and $X$) of $h^0(L)$.

Extending this, in \cite{EL}, Ein and Lazarsfeld study when the groups $K_{p,q}$ vanish for $L$ very positive and $1\leq q\leq n$. More precisely, let $L$ be of the form $P+dA$, where $P$ and $A$ are fixed divisors with $A$ ample. Then as $d$ changes, Ein and Lazarsfeld prove that $K_{p,q}$ is nonzero if $p>O(d^{q-1})$ and $p<H^0(L)-O(d^{n-1})$. They conjecture that, on the other hand, $K_{p,q}$ is trivial for $p<O(d^{q-1}).$ It is known that if $q \geq 2$, then $K_{p,q}$ is trivial for $p < O(d).$ In particular, Theorem \ref{polynomialicity} only has content when $q$ is $0$ or $1$.

For the purposes of computation, the syzygies of $M$ are often hard to work with directly, so instead we will compute using a free resolution of $k$. The natural free resolution is the Kozsul complex
$$\cdots\wedge^2H^0(L)\otimes S\rightarrow H^0(L)\otimes S\rightarrow S\rightarrow k\rightarrow 0.$$
Tensoring by $M$, we immediately arrive at the following lemma:
\begin{lemma}
\label{Koszul}
If $q \geq 1$, $K_{p,q}(B,L)$ is equal to the cohomology of the three term complex
\begin{multline*}
H^0(B+(q-1)L)\otimes\wedge^{p+1}H^0(L)\rightarrow H^0(B+qL)\otimes\wedge^pH^0(L)\\ \rightarrow H^0(B+(q+1)L)\otimes\wedge^{p-1}H^0(L).
\end{multline*}
On the other hand, for $q=0$, $K_{p,q}(B,L)$ is equal to the kernel of the map $$H^0(B)\otimes\wedge^pH^0(L)\rightarrow H^0(B+L)\otimes\wedge^{p-1}H^0(L).$$
\end{lemma}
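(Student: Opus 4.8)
The plan is to compute $\operatorname{Tor}^{\bullet\bullet}_S(M,k)$ directly from the Koszul resolution of $k$ displayed above, using that $\operatorname{Tor}$ may be computed from a free resolution of either of its two arguments. Writing $V=H^0(L)$, I would first recall the standard fact that the Koszul complex
$$\cdots\to\wedge^2V\otimes S\to V\otimes S\to S\to k\to 0$$
is a free resolution of $k$ over the polynomial ring $S=\operatorname{Sym}^{\bullet}V$, noting that each generating space $\wedge^pV$ is placed in internal degree $p$ so that the differentials preserve the internal grading.

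Next I would apply $M\otimes_S(-)$ to this resolution. Since $M\otimes_S(\wedge^pV\otimes S)\cong\wedge^pV\otimes M$ canonically, the complex computing $\operatorname{Tor}^S_{\bullet}(M,k)$ becomes
$$\cdots\to\wedge^{p+1}V\otimes M\to\wedge^pV\otimes M\to\wedge^{p-1}V\otimes M\to\cdots,$$
whose homology at the term $\wedge^pV\otimes M$ is $\operatorname{Tor}^S_p(M,k)$. The remaining work is bookkeeping with the internal grading: $\wedge^pV$ contributes degree $p$ and the summand $H^0(B+jL)$ of $M$ contributes degree $j$, so the internal degree-$m$ part of $\wedge^pV\otimes M$ is $\wedge^pV\otimes H^0(B+(m-p)L)$.

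Finally I would extract the $(p,p+q)$-bigraded piece by setting the internal degree $m=p+q$. The three consecutive terms then read $\wedge^{p+1}V\otimes H^0(B+(q-1)L)$, $\wedge^pV\otimes H^0(B+qL)$, and $\wedge^{p-1}V\otimes H^0(B+(q+1)L)$, and $K_{p,q}(B,L)$ is by definition the homology at the middle, which is precisely the stated three-term complex (up to harmlessly swapping the two tensor factors). For the boundary case $q=0$ I would observe that $M$ is supported in nonnegative degrees, so the incoming term $\wedge^{p+1}V\otimes M_{-1}=\wedge^{p+1}V\otimes H^0(B-L)$ vanishes; the homology at the middle therefore collapses to the kernel of the single remaining map, which is the final assertion.

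The argument is entirely formal once the conventions are pinned down, and I would expect no genuine obstacle. The one place that genuinely demands care—and the step I would single out as the real content—is the internal-degree bookkeeping: ensuring that the homological index $p$ together with internal degree $p+q$ translate into exactly the twists $(q-1)L$, $qL$, $(q+1)L$, and that it is precisely the truncation of $M$ below degree zero that separates out the $q=0$ case.
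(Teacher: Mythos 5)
Your proposal is correct and is exactly the argument the paper intends: the paper itself states the lemma as an immediate consequence of tensoring the Koszul resolution of $k$ by $M$ and reading off the $(p,p+q)$-bigraded piece, which is precisely your degree bookkeeping. One tiny imprecision: since $M=\oplus_{n\geq 0}H^0(B+nL)$ by definition, the incoming term in the $q=0$ case is $\wedge^{p+1}H^0(L)\otimes M_{-1}$ with $M_{-1}=0$ by fiat (not ``$H^0(B-L)$''), which is the truncation point you correctly identify as the reason that case collapses to a kernel.
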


Finally, we say a few words on Voisin's Hilbert scheme approach to Koszul cohomology (see \cite{Voisin1}, \cite{Voisin2}). Let $X$ be a smooth curve or surface. Then denote by $\operatorname{Hilb}^n(X)$ the $n$th Hilbert scheme of points of $X.$ We have a natural incidence subscheme $I_n$ in $X\times\operatorname{Hilb}^n(X).$ Let $p\colon I_n\rightarrow X$ and $q\colon I_n\rightarrow\operatorname{Hilb}^n(X).$
\begin{theorem}
\label{Voisin}
Let $L^{[n]}$ be $\wedge^n\pr_{2*}p^*L.$ Then for $q\geq 1$, we have
$$K_{p,q}(B,L)\cong\operatorname{coker}(H^0(B+(q-1)L)\otimes H^0(L^{[p+1]})\rightarrow H^0((B+(q-1)L)\boxtimes L^{[p+1]}|_{I_{p+1}})).$$ Furthermore,
$$K_{p,0}\cong\operatorname{ker}(H^0(B)\otimes H^0(L^{[p+1]})\rightarrow H^0(B\boxtimes L^{[p+1]}|_{I_{p+1}}).$$
\end{theorem}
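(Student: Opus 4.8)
The plan is to evaluate the right-hand side by pushing everything down to the Hilbert scheme and then matching the outcome against the Koszul complex of Lemma~\ref{Koszul}. Write $n = p+1$ and $M = B + (q-1)L$, and for a sheaf $F$ on $X$ let $\mathcal{T}_n(F) = q_* p^* F$ be the rank-$n$ tautological bundle it determines, so that $L^{[n]} = \det \mathcal{T}_n(L)$. The first step is the identification $H^0(\operatorname{Hilb}^n(X), L^{[n]}) \cong \wedge^n H^0(X,L)$, which makes the source $H^0(M) \otimes H^0(L^{[n]})$ of the stated map coincide with the first term $H^0(B+(q-1)L) \otimes \wedge^{p+1} H^0(L)$ of the Koszul complex. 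Since $q \colon I_n \to \operatorname{Hilb}^n(X)$ is finite flat and $L^{[n]}$ is pulled back from $\operatorname{Hilb}^n(X)$, the projection formula gives $q_* \bigl( p^* M \otimes q^* L^{[n]} \bigr) \cong \mathcal{T}_n(M) \otimes L^{[n]}$, so that $H^0 \bigl( I_n, M \boxtimes L^{[n]}|_{I_n} \bigr) \cong H^0 \bigl( \operatorname{Hilb}^n(X), \mathcal{T}_n(M) \otimes L^{[n]} \bigr)$, and the stated map is the one induced by the tautological evaluation $H^0(M) \otimes \mathcal{O} \to \mathcal{T}_n(M)$.

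The heart of the argument is to resolve the structure sheaf of the incidence locus. Starting from the ideal sequence $0 \to \mathcal{I}_{I_n} \to \mathcal{O}_{X \times \operatorname{Hilb}^n(X)} \to \mathcal{O}_{I_n} \to 0$ twisted by $M \boxtimes L^{[n]}$, Künneth together with the identification above shows $H^0 \bigl( X \times \operatorname{Hilb}^n(X), M \boxtimes L^{[n]} \bigr) \cong H^0(M) \otimes \wedge^n H^0(L)$, and the cokernel of restriction to $I_n$ is governed by $H^1 \bigl( X \times \operatorname{Hilb}^n(X), \mathcal{I}_{I_n} \otimes (M \boxtimes L^{[n]}) \bigr)$. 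To compute this I would use the locally free resolution of $\mathcal{O}_{I_n}$ by exterior products of tautological bundles that is available on the Hilbert scheme of points of a curve or a surface; pushing it to $\operatorname{Hilb}^n(X)$ and applying Künneth converts the associated hypercohomology spectral sequence into the three-term complex $H^0(B+(q-1)L) \otimes \wedge^{p+1} H^0(L) \to H^0(B+qL) \otimes \wedge^{p} H^0(L) \to H^0(B+(q+1)L) \otimes \wedge^{p-1} H^0(L)$, with the boundary maps induced by the tautological evaluations agreeing with the Koszul differentials $d_1, d_2$.

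To extract exactly the middle cohomology one must know that the higher cohomology groups that would otherwise contribute to the spectral sequence --- those of the tautological bundles and of $M \boxtimes L^{[n]}$ --- vanish; this is precisely where smoothness of $\operatorname{Hilb}^n(X)$, hence the restriction $\dim X \le 2$, and the positivity of $L$ are indispensable. Granting these vanishings one obtains $H^0 \bigl( I_n, M \boxtimes L^{[n]}|_{I_n} \bigr) \cong \ker d_2$ compatibly with $d_1$, so the cokernel of the stated map is $\ker d_2 / \operatorname{im} d_1 = K_{p,q}(B,L)$; the case $q=0$ is the same computation with the complex truncated to its first two terms, so that the middle cohomology degenerates to $\ker d_1$ and one recovers $K_{p,0}$ as the kernel of the restriction map. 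The main obstacle, and the substance of Voisin's work, is the construction of the tautological resolution of $\mathcal{O}_{I_n}$ and the proof of the accompanying vanishing; it is exactly this step that fails when $\dim X > 2$, where $\operatorname{Hilb}^n(X)$ is singular and possibly reducible, which is what motivates replacing it by $S_n$-equivariant sheaves on $X^n$ in the remainder of this paper.
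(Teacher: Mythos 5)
First, a point of orientation: the paper itself does not prove Theorem \ref{Voisin} --- the introduction states explicitly that Voisin's interpretation is described ``but not proved,'' the result being quoted from \cite{Voisin1}, \cite{Voisin2}. What the paper does supply, in the remark immediately after the statement, is the intended logical skeleton: the two canonical identifications $H^0(L^{[n]})\cong\wedge^n H^0(L)$ and $H^0((B+(q-1)L)\boxtimes L^{[n]}|_{I_n})\cong\ker\bigl(H^0(B+qL)\otimes\wedge^{n-1}H^0(L)\rightarrow H^0(B+(q+1)L)\otimes\wedge^{n-2}H^0(L)\bigr)$, after which the theorem is formal: the cokernel of the map from $H^0(B+(q-1)L)\otimes\wedge^{p+1}H^0(L)$ into the kernel of the second Koszul differential is exactly the middle cohomology of the three-term complex of Lemma \ref{Koszul}. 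Your formal frame (K\"unneth, the projection formula along the finite flat projection $I_{p+1}\rightarrow\operatorname{Hilb}^{p+1}(X)$, matching the restriction map with the first Koszul differential) reproduces this skeleton correctly, so the entire content of the theorem lives in the two identifications, above all the second.

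That is exactly where your proposal has a genuine gap. You compute $H^0(I_n, M\boxtimes L^{[n]}|_{I_n})$ by invoking ``the locally free resolution of $\mathcal{O}_{I_n}$ by exterior products of tautological bundles'' on $X\times\operatorname{Hilb}^n(X)$. No such resolution is constructed in your argument, none is cited, and this is not an off-the-shelf object one can simply quote; moreover, the assertion that its hypercohomology spectral sequence collapses onto the three-term Koszul complex with the correct differentials is precisely the statement to be proved, so the heart of your argument is a black box. Compounding this, your mechanism requires cohomological vanishing, and you declare ``the positivity of $L$'' indispensable; but Theorem \ref{Voisin} carries no positivity hypothesis, and both identifications are unconditional in $L$ (the first is classical for curves and is a theorem of Danila and Scala for surfaces; the second is Voisin's lemma, proved by direct geometry on $I_n$ --- indeed its $S_n$-equivariant analogue in Section 3 of this paper needs only left-exactness of global sections applied to the $\mathcal{O}_Z$-resolution, with no vanishing at all). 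So even granting the black box, your argument would establish the theorem only for sufficiently positive $L$, which is strictly weaker than what is stated. A last detail worth flagging: with $H^0(L^{[p+1]})=\wedge^{p+1}H^0(L)$, Lemma \ref{Koszul} identifies the kernel in the $q=0$ display with $K_{p+1,0}(B,L)$, not $K_{p,0}$ (compare the exact sequence in Theorem \ref{interpretation}); your truncation remark repeats the printed index without checking it, and carrying out the identification honestly would have caught this discrepancy in the statement itself.
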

In fact, we have $H^0(L^{[n]})=\wedge^nH^0(L)$ and $H^0((B+(q-1)L)\boxtimes L^{[n]}|_{I_n})=\operatorname{ker}(H^0(B+qL)\otimes H^0(L^{[n-1]})\rightarrow H^0(B+(q+1)L)\otimes H^0(L^{[n-2]})).$ We thus see that Theorem \ref{Voisin} is a ``geometrized" version of Theorem \ref{Koszul}.

We would like to comment that one of our original motivations was to rewrite this in terms of the $q$th or $(q-1)$th sheaf cohomology of some sheaf. This is accomplished by the formalism of the next section combined with the results of \cite{Scala}.

\section{$S_n$-Equivariant Sheaves on $X^n$}In this section, we prove an analogue of Theorem \ref{Voisin}, but with the category of coherent sheaves on the Hilbert scheme replaced by the category of $S_n$-equivariant sheaves on $X^n$. Here we take $X$ to be smooth projective, $X^n$ to be its $n$th Cartesian power, and $B$ to be a locally free sheaf on $X$. 

An $S_n$-equivariant sheaf on $X^n$ is a sheaf on $X^n$ together with $S_n$-linearization in the sense of \cite[Chapter 5]{chrissG:geometricRepresentationTheory}. Note that for any map $X^n\rightarrow Y$ that is fixed under the $S_n$ action on $X^n$, we have an ``$S_n$-invariant pushforward" functor from the category of $S_n$-equivariant quasicoherent sheaves on $X^n$ to the category of quasicoherent sheaves on $Y$. The usual pushforward is equipped with a natural $S_n$ action, and the $S_n$-invariant pushforward is defined just to be the invariants under this action. The $S_n$-invariant cohomology $H^i_{S_n}$  is defined to be the derived functor of $S_n$-invariant pushforward to $\operatorname{Spec} k$.
 % We will use $H^i_{S_n}$ to denote the $S_n$-invariant part of cohomology. 

We start by defining $L^{[n]}$ to be the sheaf $L\boxtimes \cdots \boxtimes L$ on $X^n$ with $S_n$ acting via the alternating action. By definition, we have $H^{\bullet}_{S_n}(L^{[n]})=\wedge^n H^{\bullet}(L).$ In the next section, we will also use the sheaf $D_L$, defined to be the sheaf $L\boxtimes\cdots \boxtimes L$ with $S_n$ acting via the trivial action. From the definitions, it is clear that $L^{[n]}=\mathcal{O}^{[n]}\otimes D_L.$

Let $\Delta_i$ be the subvariety of $X\times X^n$ defined as the set of points $(x,x_1,\ldots, x_n)$ with $x=x_i.$ Let $Z$ be the scheme theoretic union of the $\Delta_i$. Then $Z$ is equipped with two projections $p\colon Z\rightarrow X$ and $q\colon Z\rightarrow X^n.$ We will make crucial use of a long exact sequence
$$0\rightarrow\mathcal{O}_Z\rightarrow\oplus_{i_1}\mathcal{O}_{\Delta_i}\rightarrow\oplus_{i_1<i_2}\mathcal{O}_{\Delta_{i_1,i_2}}\rightarrow\cdots\rightarrow\mathcal{O}_{\Delta_{1,2,\ldots,n}}.$$
We will prove the following theorem.
\begin{theorem}
\label{interpretation}
Assume that $H^i(L)=H^i(B+mL)=0$ for all $i,m>0.$ Then for $q > 1$, $K_{p,q}(B,L)\cong H^{q-1}_{S_{p+q}}(p^*B\otimes q^*L^{[p+q]}).$ We also have an exact sequence
\begin{multline*}
0\rightarrow K_{p+1,0}(B,L)\rightarrow H^0(B)\otimes H^0_{S_{p+1}}(L^{[p+1]})\rightarrow H^0_{S_{p+1}}(p^*B\otimes q^*L^{[p+1]})\\ \rightarrow K_{p,1}(B,L)\rightarrow 0.
\end{multline*}
\end{theorem}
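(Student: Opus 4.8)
The plan is to tensor the inclusion--exclusion resolution of $\mathcal{O}_Z$ by the locally free (hence flat) sheaf $\mathcal{F} := p^*B \otimes q^* L^{[n]}$, where I write $n = p+q$, and then pass to $S_n$-invariant cohomology, identifying the resulting complex with the Koszul complex of Lemma \ref{Koszul}. Since $\mathcal{F}$ is flat, tensoring
\[ 0\to\mathcal{O}_Z\to\bigoplus_{i_1}\mathcal{O}_{\Delta_{i_1}}\to\bigoplus_{i_1<i_2}\mathcal{O}_{\Delta_{i_1,i_2}}\to\cdots\to\mathcal{O}_{\Delta_{1,\ldots,n}} \]
by $\mathcal{F}$ yields an $S_n$-equivariant resolution of $p^*B\otimes q^*L^{[n]}$ by the sheaves $C^{k-1} := \bigoplus_{|I|=k}\mathcal{F}\otimes\mathcal{O}_{\Delta_I}$, and I would run the hypercohomology spectral sequence $E_1^{i,j} = H^j_{S_n}(C^i)\Rightarrow H^{i+j}_{S_n}(p^*B\otimes q^*L^{[n]})$.

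The first step is acyclicity. Each $\Delta_I$ with $|I|=k\geq 1$ is isomorphic to $X^{\,n-k+1}$, and on it $\mathcal{F}$ restricts to $(B+kL)\boxtimes L\boxtimes\cdots\boxtimes L$; by the Künneth formula together with the hypotheses $H^i(L)=H^i(B+mL)=0$ for $i,m>0$, all higher cohomology of each $C^i$ vanishes. Since taking $S_n$-invariants is exact in characteristic zero, $H^j_{S_n}(C^i)=(H^j(C^i))^{S_n}=0$ for $j>0$, so the spectral sequence collapses onto the row $j=0$ and $H^m_{S_n}(p^*B\otimes q^*L^{[n]})$ is the $m$-th cohomology of the complex $K^\bullet$ with $K^{k-1}=H^0_{S_n}(C^{k-1})$.

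The second step is computing $K^\bullet$, and here lies the main subtlety. Writing $C^{k-1}$ as a representation induced from the stabilizer $S_k\times S_{n-k}$ of $I_0=\{1,\ldots,k\}$, one has $H^0_{S_n}(C^{k-1}) = \bigl(H^0(\mathcal{F}|_{\Delta_{I_0}})\otimes\langle e_{I_0}\rangle\bigr)^{S_k\times S_{n-k}}$, where the line $\langle e_{I_0}\rangle = \langle e_1\wedge\cdots\wedge e_k\rangle$ records the sign twist built into the equivariant structure of the Koszul-type resolution. The crucial point is that the $S_k$-action on $H^0(B+kL)$ coming from the alternating linearization of $L^{[n]}$ is exactly the sign character (permuting the collapsed, identical $L$-factors acts trivially on sections of the line bundle $L^{\otimes k}$), so it cancels against $\langle e_{I_0}\rangle$; meanwhile the $S_{n-k}$-invariants of $H^0(L)^{\otimes(n-k)}$ under the signed permutation action give $\wedge^{n-k}H^0(L)$. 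Hence $K^{k-1}\cong H^0(B+kL)\otimes\wedge^{p+q-k}H^0(L)$, and the differentials, induced by the restriction maps $\mathcal{O}_{\Delta_I}\to\mathcal{O}_{\Delta_{I\cup\{j\}}}$, become the Koszul differentials built from the multiplication $H^0(L)\otimes H^0(B+kL)\to H^0(B+(k+1)L)$. Thus $K^\bullet$ is precisely the extended Koszul complex of Lemma \ref{Koszul}.

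Finally I would read off the theorem. For $q\geq 2$ the three consecutive terms $K^{q-2},K^{q-1},K^q$ are exactly the three-term complex of Lemma \ref{Koszul}, so $H^{q-1}(K^\bullet)=K_{p,q}(B,L)$, giving $K_{p,q}\cong H^{q-1}_{S_{p+q}}(p^*B\otimes q^*L^{[p+q]})$. For the boundary, note that $K^\bullet$ omits the term $k=0$, namely $H^0(B)\otimes\wedge^{p+1}H^0(L)=H^0(B)\otimes H^0_{S_{p+1}}(L^{[p+1]})$; appending it via the Koszul differential $d_{-1}$, the kernel of $d_{-1}$ is $K_{p+1,0}$ (Lemma \ref{Koszul}, case $q=0$), the cokernel of $d_{-1}$ landing in $\ker d_0 = H^0_{S_{p+1}}(p^*B\otimes q^*L^{[p+1]})$ is $K_{p,1}$, and assembling these produces exactly the claimed four-term exact sequence. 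The main obstacle throughout is the correct bookkeeping of the $S_n$-linearization: the naive permutation action makes $H^0_{S_n}(C^{k-1})$ vanish for $k\geq 2$, and it is only the sign twist inherent in the resolution that produces the nonzero Koszul terms; checking that the induced differentials coincide with the Koszul differentials is the remaining routine but essential verification.
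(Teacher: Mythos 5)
Your proposal is correct and follows essentially the same route as the paper's own proof: tensoring the sign-twisted inclusion--exclusion resolution of $\mathcal{O}_Z$ by $B\boxtimes L^{[n]}$, using the vanishing hypotheses plus exactness of invariants in characteristic zero to collapse to the row of $H^0_{S_n}$'s, and identifying each term via the cancellation of the two sign actions on the collapsed factor to recover the Koszul complex of Lemma \ref{Koszul}. Your treatment is in fact slightly more explicit than the paper's (naming the hypercohomology spectral sequence and the induced-representation bookkeeping), but the ideas coincide.
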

Note that when $B$ has no higher cohomology, we can write this more aesthetically as $K_{p,q}(B,L)=H^q_{S_{p+q}}((B\boxtimes L^{[p+q]})\otimes \mathcal{I}_Z),$ where $\mathcal{I}_Z$ is the ideal sheaf of $Z$.

Before proving the theorem, we need an exact sequence. Define $\Delta_{i_1,i_2,\ldots,i_m}$ to be the subvariety of $X\times X^n$ such that $(x,x_1,\ldots,x_n)$ is a point of $\Delta_{i_1,i_2,\ldots,i_m}$ if and only if $x=x_{i_1}=\cdots=x_{i_m}.$ Now note that we have a long exact sequence of $S_n$-equivariant sheaves on $X\times X^n$
$$0\rightarrow\mathcal{O}_Z\rightarrow\oplus_{i_1}\mathcal{O}_{\Delta_i}\rightarrow\oplus_{i_1<i_2}\mathcal{O}_{\Delta_{i_1,i_2}}\rightarrow\cdots\rightarrow\mathcal{O}_{\Delta_{1,2,\ldots,n}}.$$

We define the map from $\mathcal{O}_{\Delta_{i_1,i_2,\cdots, i_m}}$ to $\mathcal{O}_{\Delta_{j_1,j_2,\cdots j_{m+1}}}$ to be nonzero if and only if $\Delta_{j_1,\cdots, j_{m+1}}$ is a subvariety of $\Delta_{i_1,i_2,\cdots, i_m}$, in which case we define it to be the natural map induced by the inclusion, up to sign. If the difference between $\{i_1,\cdots i_m\}$ and $\{j_1,\cdots j_{m+1}\}$ is $j_k$, then we modify this map by a factor of $(-1)^{k-1}.$

We need to be careful to define our $S_n$-equivariant structure on our complex in a way compatible with this modification. For any element $\sigma\in S_n,$ we have $\sigma_*\mathcal{O}_{\Delta_{i_1,i_2,\cdots, i_m}}\cong\mathcal{O}_{\Delta_{h_1,h_2,\cdots,h_m}},$ where $h_1 < h_2 <\cdots < h_m$ and $\{h_1,\cdots,h_m\}=\{\sigma{i_1},\cdots, \sigma{i_m}\}$. This gives each term of our complex a natural $S_n$-action. To make our maps $S_n$-equivariant, we modify the action by the sign of the permutation sending $h_1,h_2,\cdots h_m$ to $\sigma{i_1},\cdots \sigma{i_m}.$

After these definitions, it is quite easy to check that we have indeed defined a $S_n$-equivariant complex. Exactness is harder; a proof can be found in Appendix A of \cite{Scala}.

We now prove Theorem \ref{interpretation}.
\begin{proof}
Tensor our exact sequence by $B\boxtimes L^{[n]}$. Each term of the complex is of the form $\oplus_{i_1<i_2<\cdots<i_m}B\boxtimes L^{[n]}|_{\Delta_{i_1,i_2,\ldots,i_m}}.$ We compute the $S_n$-invariant cohomology of this sheaf. This is the same as the $S_m\times S_{n-m}$ invariant cohomology of $B\boxtimes L^{[n]}|_{\Delta_{1,2,\ldots,m}}.$ Now note that there is a natural $S_{n-m}$-equivariant isomorphism between $\Delta_{1,2,\ldots, m}$ and $X \times X^{n-m}.$ This sends $B\boxtimes L^{[n]}|_{\Delta_{1,2,\ldots, m}}$ to $(B+mL)\boxtimes L^{[n-m]}.$ The action of $S_m$ on $H^0(B+mL)$ is trivial, as the alternating actions coming from our modified $S_n$-equivariance and $L^{[n]}$ cancel, and thus we can simply consider the $S_{n-m}$-invariant cohomology. By our assumptions and the fact that $H^{\bullet}_{S_n}(L^{[n]})=\wedge^nH^{\bullet}(L),$ we see that the higher $S_{n-m}$-invariant cohomology of $B\boxtimes L ^{[n]}|_{\Delta_{1,2,\ldots,m}}$ vanishes and that $H^0_{S_{n-m}}(B\boxtimes L^{[n]}|_{\Delta_{1,2,\ldots,m}})\cong H^0(B+mL)\otimes\wedge^{n-m}H^0(L).$

Our long exact sequence now immediately shows that the $S_n$-invariant cohomology of $B\boxtimes L^{[n]}|_Z$ is given by the cohomology of the complex
$$0\rightarrow H^0(B+L)\otimes\wedge^{n-1}H^0(L)\rightarrow H^0(B+2L)\otimes\wedge^{n-2}H^0(L)\rightarrow\cdots$$
and, setting $n=p+q$, our result immediately follows from Lemma \ref{Koszul}.
\end{proof}

We note that in particular, if $H^{q-1}(p^*B\otimes q^*L^{[p+q]})=0$ (note that this is non-invariant cohomology!), then $K_{p,q}(B,L)$ vanishes. A very similar statement was first claimed in the proof of Theorem 3.2 of \cite{Green2}, with a minor mistake corrected by Inamdar in \cite{Inamdar}. Since then, this statement has been used in various places (e.g., \cite{HwangTo}\cite{LazarsfeldPareschiPopa}.)

Note that for any map $X^n\rightarrow Y$ that is fixed under the $S_n$ action on $X^n$, we have a ``$S_n$-invariant pushforward" functor from the category of $S_n$-equivariant quasicoherent sheaves on $X^n$ to the category of quasicoherent sheaves on $Y$. The usual pushforward is equipped with a natural $S_n$ action, and the $S_n$-invariant pushforward is defined just to be the invariants under this action. The $S_n$-invariant cohomology is defined to be the $S_n$-invariant pushforward to $\operatorname{Spec} k$
\section{Polynomial Growth of $K_{p,q}(B,L_d)$}
In this section, we will let $L_d=dA+P,$ where $A$ and $P$ are lines bundles with $A$ ample. We allow $B$ to be any locally free sheaf on $X$. Our main theorem is the following.
\begin{theorem}
\label{polynomialicity}
For $d$ sufficiently large, $\operatorname{dim} K_{p,q}(B,L_d)$ is a polynomial in $d.$
\end{theorem}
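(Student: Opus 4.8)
The plan is to run the line bundle $L_d=dA+P$ through Theorem \ref{interpretation} and reduce the polynomiality of each Betti number to a vanishing statement plus the polynomiality of an Euler characteristic. I fix $(p,q)$ and set $n=p+q$; since $n$ is then fixed, only the finitely many twists $B+mL_d$ with $0\le m\le n$ occur, so Serre vanishing supplies a single threshold $d_0$ beyond which $H^i(L_d)=H^i(B+mL_d)=0$ for all $i,m>0$. Thus for $d\ge d_0$ the hypotheses of Theorem \ref{interpretation} hold. The key input I would establish is: for $d\gg0$ one has $H^i_{S_n}(p^*B\otimes q^*L_d^{[n]})=0$ for all $i>0$. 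In characteristic zero the functor of $S_n$-invariants is exact, so $H^i_{S_n}$ is just the invariant part of ordinary cohomology on $Z$; since $q\colon Z\to X^n$ is finite this equals the cohomology of $q_*(p^*B\otimes q^*L_d^{[n]})=(q_*p^*B)\otimes L_d^{[n]}$ on $X^n$. Forgetting the linearization, $L_d^{[n]}$ is the ordinary line bundle $(A\boxtimes\cdots\boxtimes A)^{\otimes d}\otimes(P\boxtimes\cdots\boxtimes P)$, i.e.\ a fixed coherent sheaf tensored with the $d$-th power of the ample line bundle $D_A=A\boxtimes\cdots\boxtimes A$ on $X^n$, so ordinary Serre vanishing on $X^n$ finishes the claim.

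Granting this, the cases split cleanly. For $q\ge2$, Theorem \ref{interpretation} gives $K_{p,q}(B,L_d)\cong H^{q-1}_{S_n}(p^*B\otimes q^*L_d^{[n]})=0$ for $d\gg0$, so the Betti number is the zero polynomial. The content is in $q=0$ and $q=1$. Here I use that the complex constructed in the proof of Theorem \ref{interpretation}, whose $m$-th term is $H^0(B+mL_d)\otimes\wedge^{\,p+1-m}H^0(L_d)$, computes $H^\bullet_{S_{p+1}}$ of the restricted sheaf and, by the vanishing above, has cohomology concentrated in degree $0$. Consequently $\dim H^0_{S_{p+1}}(p^*B\otimes q^*L_d^{[p+1]})$ equals an alternating sum of the quantities $h^0(B+mL_d)\binom{h^0(L_d)}{p+1-m}$, which is a polynomial in $d$ because each $h^0(B+mL_d)$ is a Hilbert polynomial and each binomial coefficient is a polynomial in the Hilbert polynomial $h^0(L_d)$.

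It remains to pin down $K_{p,0}$ and $K_{p,1}$ separately, since the bookkeeping so far only controls the difference $\dim K_{p+1,0}-\dim K_{p,1}$ through the four-term exact sequence of Theorem \ref{interpretation}. To break the symmetry I would compute the bottom row directly. For $d\gg0$ the bundle $L_d$ is globally generated, so the syzygy bundle $M_{L_d}$ defined by $0\to M_{L_d}\to H^0(L_d)\otimes\mathcal{O}_X\to L_d\to0$ is locally free, and the classical (non-equivariant) description of Koszul cohomology yields $K_{p,0}(B,L_d)\cong H^0(X,\wedge^pM_{L_d}\otimes B)$. Running the exact sequences $0\to\wedge^aM_{L_d}\to\wedge^aH^0(L_d)\otimes\mathcal{O}_X\to\wedge^{a-1}M_{L_d}\otimes L_d\to0$ and a Castelnuovo--Mumford regularity argument, the higher cohomology of $\wedge^pM_{L_d}\otimes B$ vanishes for $d\gg0$, so $\dim K_{p,0}(B,L_d)=\chi(X,\wedge^pM_{L_d}\otimes B)$. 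Since $c(M_{L_d})=c(L_d)^{-1}$, each $c_j(M_{L_d})=(-1)^jc_1(L_d)^j$ is a polynomial in $d$ and $\operatorname{rank}M_{L_d}=h^0(L_d)-1$ is a Hilbert polynomial, so every component of $\operatorname{ch}(\wedge^pM_{L_d})$ is polynomial in $d$ and Hirzebruch--Riemann--Roch makes $\chi(X,\wedge^pM_{L_d}\otimes B)$ a polynomial in $d$. Feeding this into the four-term exact sequence then expresses $\dim K_{p,1}(B,L_d)$ as $\dim H^0_{S_{p+1}}(p^*B\otimes q^*L_d^{[p+1]})-h^0(B)\binom{h^0(L_d)}{p+1}+\dim K_{p+1,0}(B,L_d)$, a sum of polynomials.

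The step I expect to be the main obstacle is this direct analysis of the bottom row $K_{p,0}$: both the vanishing of the higher cohomology of $\wedge^pM_{L_d}\otimes B$ and the polynomiality of its Euler characteristic require controlling a bundle whose rank grows with $d$, and it is precisely here that one must go beyond the formal vanishing-plus-Euler-characteristic argument that disposes of $q\ge2$.
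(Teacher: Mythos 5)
Your reduction to the cases $q\in\{0,1\}$ and your treatment of the middle term $H^0_{S_{p+1}}(p^*B\otimes q^*L_d^{[p+1]})$ are both sound: the Serre-vanishing argument on $X^n$ (finiteness of $q$ plus exactness of invariants in characteristic zero) is essentially the paper's own argument for $q\ge2$, and computing $\dim H^0_{S_{p+1}}$ as the alternating sum of the terms $h^0(B+mL_d)\binom{h^0(L_d)}{p+1-m}$ is a legitimate alternative to the paper's device of descending to $\operatorname{Sym}^{p+1}(X)$. The proof breaks, however, exactly at the step you flagged as the main obstacle: the claimed vanishing of $H^i(\wedge^pM_{L_d}\otimes B)$ for $i>0$ and $d\gg0$ is \emph{false}, and no Castelnuovo--Mumford regularity argument can establish it. From the exact sequence $0\to\wedge^pM_{L_d}\otimes B\to\wedge^pH^0(L_d)\otimes B\to\wedge^{p-1}M_{L_d}\otimes B\otimes L_d\to0$ one reads off an inclusion $K_{p-1,1}(B,L_d)\hookrightarrow H^1(\wedge^pM_{L_d}\otimes B)$, since $K_{p-1,1}$ is precisely the cokernel of $\wedge^pH^0(L_d)\otimes H^0(B)\to H^0(\wedge^{p-1}M_{L_d}\otimes B\otimes L_d)$. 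By the Ein--Lazarsfeld nonvanishing theorem recalled in Section 2 of the paper, $K_{p-1,1}(B,L_d)\neq0$ for all $p$ in a range growing with $d$, so this $H^1$ is nonzero for essentially every relevant $p$. Concretely, already for $X=\mathbb{P}^1$, $B=\mathcal{O}$, $L_d=\mathcal{O}(d)$ one has $M_{L_d}\cong\mathcal{O}(-1)^{\oplus d}$, so $\wedge^pM_{L_d}\cong\mathcal{O}(-p)^{\oplus\binom{d}{p}}$; for $p\ge2$ its $h^0$ is $0=\dim K_{p,0}$, while your formula would give $\chi=\binom{d}{p}(1-p)$, which is negative.

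The deeper point is that this route cannot be repaired: the quantity that Hirzebruch--Riemann--Roch does control is exactly the combination you already possess. In the $\mathbb{P}^1$ example, $\chi(\wedge^pM_{L_d})=\dim K_{p,0}-\dim K_{p-1,1}$ on the nose, and in general $\chi(\wedge^{p+1}M_{L_d}\otimes B)$ recovers (modulo manifestly polynomial contributions from $H^{>0}(B)$ and Serre vanishing) the difference $\dim K_{p+1,0}-\dim K_{p,1}$ --- the same single linear relation that the four-term exact sequence of Theorem \ref{interpretation} provides. So no Chern-class computation on $X$ can break the symmetry between the two unknowns. The idea your proposal is missing is the paper's descent step: write $K_{p+1,0}(B,L_d)\cong H^0_{S_{p+1}}(B\boxtimes L_d^{[p+1]}\otimes\mathcal{I}_Z)$, push forward along $\pr_2$ (this preserves $H^0$, and by the projection formula the entire $d$-dependence becomes a twist of the \emph{fixed} sheaf $\mathcal{G}=\pr_{2*}(B\boxtimes P^{[p+1]}\otimes\mathcal{I}_Z)$ by $dD_A$), then take the $S_{p+1}$-invariant pushforward to $\operatorname{Sym}^{p+1}(X)$, where $D_A$ descends to an ample line bundle $A'$ and the invariant sections become ordinary sections of a fixed coherent sheaf $\mathcal{H}$ twisted by $dA'$; polynomiality is then just the Hilbert polynomial of $\mathcal{H}$. (This does not contradict the nonvanishing of $K_{p,1}$: the higher cohomology of $\mathcal{H}\otimes dA'$ differs from $H^1_{S_{p+1}}$ of the ideal-sheaf twist upstairs by the higher direct images along $\pr_2$, so Serre vanishing downstairs is harmless.) Some mechanism of this kind, expressing $K_{p+1,0}$ \emph{alone} as sections of a fixed sheaf with a growing ample twist, is indispensable for $q=0,1$, and your argument does not contain one.
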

\begin{proof}
We start by showing that $K_{p,q}(B,L_d)$ is trivial for $q \geq 2$ and $d$ large enough. This has been known at least since \cite{EL2}, but we will reprove it here for the sake of self-containedness. Let $\mathcal{F}$ be the sheaf $\pr_{2*}\mathcal{O}_Z\otimes P^{[p+q]}.$ As $q$ is finite, pushforward along it is exact. Therefore, by Theorem \ref{interpretation}, we know that $$K_{p,q}(B,L_d)\cong H^{q-1}_{S_{p+q}}(\mathcal{F}\otimes dD_{A})$$ (see the beginning of the previous section for the definition of $D_A$). But $D_{A}$ is ample, so this is zero for large enough $d.$

By Theorem \ref{interpretation}, it suffices to show that the dimensions of $K_{p+1,0}(B,L_d), H^0(B)\otimes H^0_{S_{p+1}}(L_d^{[p+1]}),$ and $H^0_{S_{p+1}}(p^*B\otimes q^*L_d^{[p+1]})$ all grow polynomially in $d.$ Using that $H^{\bullet}_{S_{n}}(L_d^{[n]})\cong\wedge^nH^{\bullet}(L_d),$ we immediately see that $\operatorname{dim} H^0(B)\otimes H^0_{S_{p+1}}(L_d^{[p+1]})$ grows polynomially in $d.$

It follows immediately from the second part of Theorem \ref{interpretation} that $K_{p+1,0}(B,L_d)\cong H^0_{S_{p+1}}(B\boxtimes L_d^{[p+1]}\otimes\mathcal{I}_Z).$ Letting $\mathcal{G}$ denote the sheaf $\pr_{2*}(B\boxtimes L_d^{[p+1]}\otimes\mathcal{I}_Z),$ we see that $$K_{p+1,0}\cong H^0_{S_{p+1}}(\mathcal{G}\otimes dD_{A}).$$

To proceed, we take the $S_{p+1}$-invariant pushforward of this to $\operatorname{Sym}^{p+1}(X).$ Let the $S_{p+1}$-invariant pushforward of $\mathcal{G}$ be $\mathcal{H}.$ Note that $D_{A}$ is the pullback (with the natural $S_{p+1}$-action) of an ample line bundle $A'$ on $\operatorname{Sym}^{p+1}(X).$ We thus see that $K_{p+1,0}(B,L_d) =H^0(\mathcal{H}\otimes dA'),$ and by the existence of the Hilbert polynomial, its dimension is a polynomial for large enough $d.$ An identical argument shows that $\operatorname{dim}H^0_{S_{p+1}}(p^*B\otimes q^*L_d^{[p+1]})$ is a polynomial for large enough $d.$
\end{proof}

Note that this theorem answers the first part of Problem 7.2 in \cite{EL} in the affirmative. Their specific question asks if the triviality of a specific Koszul cohomology group is independent of $d$ if $d$ is large. As any nonzero polynomial has only finitely many roots, Theorem \ref{polynomialicity} implies that the answer is yes.

\bibliographystyle{plain}
\bibliography{polygraphsyzygies}
\end{document}